\numberwithin{equation}{section}
\newtheorem{theorem}{Theorem}[section]
\newtheorem{proposition}[theorem]{Proposition}
\newtheorem{lemma}[theorem]{Lemma}
\newtheorem{corollary}[theorem]{Corollary}
\theoremstyle{definition}
\newtheorem{definition}[theorem]{Definition}
\DeclareMathOperator*{\Ind}{Ind}
\begin{document}

\baselineskip=15pt

\title[Ramified covering maps of singular curves]{Ramified covering maps of singular
curves and stability of pulled back bundles}

\author[I. Biswas]{Indranil Biswas}

\address{Department of Mathematics, Shiv Nadar University, NH91, Tehsil
Dadri, Greater Noida, Uttar Pradesh 201314, India}

\email{indranil.biswas@snu.edu.in, indranil29@gmail.com}

\author[M. Kumar]{Manish Kumar}

\address{Statistics and Mathematics Unit, Indian Statistical Institute,
Bangalore 560059, India}

\email{manish@isibang.ac.in}

\author[A. J. Parameswaran]{A. J. Parameswaran}

\address{School of Mathematics, Tata Institute of Fundamental
Research, Homi Bhabha Road, Mumbai 400005, India}

\email{param@math.tifr.res.in}

\subjclass[2010]{14H30, 14H20, 14H60}

\keywords{\'Etale over singular locus, stable bundle, genuinely ramified map}

\date{}

\begin{abstract}
Let $f\, :\, X\, \longrightarrow\, Y$ be a generically smooth nonconstant morphism between irreducible projective
curves, defined over an algebraically closed field, which is \'etale on an open subset of $Y$ that contains both
the singular locus of $Y$ and the image, in $Y$, of the singular locus of $X$. We prove that the following
statements are equivalent:
\begin{enumerate}
\item The homomorphism of \'etale fundamental groups $$f_*\, :\, \pi_1^{\rm et}(X)
\,\longrightarrow\,\pi_1^{\rm et}(Y)$$ induced by $f$ is surjective.

\item There is no nontrivial \'etale covering $\phi\, :\, Y'\, \longrightarrow\, Y$ admitting
a morphism $q\,:\, X\, \longrightarrow\, Y'$ such that $\phi\circ q \,=\, f$.

\item The fiber product $X\times_Y X$ is connected.

\item $\dim H^0(X,\, f^*f_* {\mathcal O}_X)\,=\, 1$.

\item ${\mathcal O}_Y\, \subset\, f_*{\mathcal O}_X$ is the maximal semistable subsheaf.

\item The pullback $f^*E$ of every stable sheaf $E$ on $Y$ is also stable.
\end{enumerate}
\end{abstract}

\maketitle

\section{Introduction}

Let $k$ be an algebraically closed field. Let $X$ and $Y$ be irreducible smooth projective curves and
$f\, :\, X\, \longrightarrow\, Y$ a generically smooth nonconstant map. In \cite{BP} it was proved
that the following six statements are equivalent:
\begin{enumerate}
\item The homomorphism between \'etale fundamental groups
$$f_*\, :\, \pi_1^{\rm et}(X)\,\longrightarrow\, \pi_1^{\rm et}(Y)$$
induced by $f$ is surjective.

\item The map $f$ does not factor through some nontrivial \'etale cover of $Y$ (in particular, $f$ is not
nontrivial \'etale).

\item The fiber product $X\times_Y X$ is connected.

\item $\dim H^0(X,\, f^*f_*{\mathcal O}_X)\,=\,1$.

\item The maximal semistable subbundle of the direct image $f_*{\mathcal O}_X$ is ${\mathcal O}_Y$.

\item For every stable vector bundle $E$ on $Y$, the pullback $f^*E$ is also stable.
\end{enumerate}

Our aim here is to extend this to the context of generically smooth morphisms between singular curves.
Examples show that some conditions are needed in order to be able extend the above result to the
context of generically smooth morphisms between singular curves; see Section \ref{se-ex}. To address this, we
consider maps that are \'etale over singular locus (EOSL for short).

Let $X$ and $Y$ be reduced irreducible projective curves over $k$, and let
$$
f\, :\, X\, \longrightarrow\, Y
$$
be a generically smooth nonconstant morphism. The singular loci of $X$ and $Y$ are denoted by $S_X$ and
$S_Y$ respectively. The map $f$ is called EOSL if $f$ is \'etale over a neighborhood of $S_Y\cup f(S_X)$.

Let $\widehat{X}$ and $\widehat{Y}$ be the normalizations of $X$ and $Y$ respectively. A map
$f\, :\, X\, \longrightarrow\, Y$ produces a map $f'\, :\, \widehat{X}\, \longrightarrow\, \widehat{Y}$.

Let $f\, :\, X\, \longrightarrow\, Y$ be an EOSL map. We prove that the following seven statements are
equivalent (see Theorem \ref{thm1} and Theorem \ref{thm2}):
\begin{enumerate}
\item $f$ is genuinely ramified.

\item The map $f'\, :\, \widehat{X}\, \longrightarrow\, \widehat{Y}$ is genuinely ramified.

\item The homomorphism of \'etale fundamental groups $$f_*\, :\, \pi_1^{\rm et}(X)
\,\longrightarrow\,\pi_1^{\rm et}(Y)$$ induced by $f$ is surjective.

\item There is no nontrivial \'etale covering $\phi\, :\, Y'\, \longrightarrow\, Y$ admitting
a morphism $q\,:\, X\, \longrightarrow\, Y'$ such that $\phi\circ q \,=\, f$.

\item The fiber product $X\times_Y X$ is connected.

\item $\dim H^0(X,\, f^*f_* {\mathcal O}_X)\,=\, 1$.

\item The pullback $f^*E$ of every stable sheaf $E$ on $Y$ is also stable.
\end{enumerate}

\section{EOSL maps and semistability}

The base field $k$ is assumed to be algebraically closed. There is no assumption on its characteristic.

Let $X$ and $Y$ be reduced irreducible projective curves over $k$, and let
$$
f\, :\, X\, \longrightarrow\, Y
$$
be a generically smooth nonconstant morphism. Let $B_f\, \subset\, Y$ be the branch locus of $f$, i.e., the
finite subset of $Y$ over which $f$ fails to be \'etale. So the restriction
$$
f\big\vert_{f^{-1}(Y\setminus B_f)}\, :\, f^{-1}(Y\setminus B_f) \, \longrightarrow\, Y\setminus B_f
$$
is \'etale. The singular locus of $X$ (respectively, $Y$) will be denoted by $S_X$ (respectively,
$S_Y$). The map $f$ will be called \textit{\'etale over singular locus} (EOSL for short) if
\begin{equation}\label{e1}
B_f\cap (S_Y\cup f(S_X))\,\,=\,\, \emptyset.
\end{equation}
Therefore, $f$ is EOSL if $f$ is \'etale at every point of $S_X \cup f^{-1}(S_Y)$. Moreover, when $f$ is EOSL,
then $S_X\,=\,f^{-1}(S_Y)$.

\begin{lemma}\label{lem1}
Let $f\, :\, X\, \longrightarrow\, Y$ be an EOSL map. Then the following two hold:
\begin{enumerate}
\item The direct image $f_*{\mathcal O}_X$ is locally free on $Y$.

\item For any torsionfree sheaf $E$ on $Y$, the pullback $f^*E$ is torsionfree.
\end{enumerate}
\end{lemma}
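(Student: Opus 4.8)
The plan is to establish the two assertions separately, exploiting the hypothesis \eqref{e1} to localize the problem near the singularities.

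\medskip

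For the first assertion, I would argue that $f_*\mathcal{O}_X$ is locally free by checking it locally on $Y$. Since $Y$ is a curve, a coherent sheaf on $Y$ is locally free if and only if it is torsionfree, and $f_*\mathcal{O}_X$ is visibly torsionfree (it injects into the sheaf of rational functions on $X$, pushed forward, which is torsionfree because $f$ is finite and dominant). Actually this already suffices, but to keep track of the rank it is cleaner to note that over the smooth, étale locus $Y\setminus(B_f\cup S_Y)$ the map $f$ is a finite étale morphism of smooth curves, so there $f_*\mathcal{O}_X$ is locally free of rank equal to $\deg f$. Near a point $y\in S_Y$, by the EOSL condition $f$ is étale over a neighborhood of $y$, so after shrinking, $\mathcal{O}_X$ is, étale-locally, a disjoint union of copies of $\mathcal{O}_Y$; pushing forward commutes with the étale base change and shows $f_*\mathcal{O}_X$ is free of rank $\deg f$ there. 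At the remaining points, which are smooth points of both $X$ and $Y$, local freeness of the direct image under a finite flat (indeed finite between smooth curves, hence flat) morphism is standard. The upshot is that $f_*\mathcal{O}_X$ is locally free of rank $\deg f$ everywhere on $Y$.

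\medskip

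For the second assertion, let $E$ be a torsionfree sheaf on $Y$; I want to show $f^*E$ has no torsion. The torsion of $f^*E$ is supported on a finite set of points of $X$, so it is enough to show $f^*E$ is torsionfree in a neighborhood of each point $x\in X$. If $x$ is a smooth point of $X$, then any coherent sheaf is torsionfree near $x$ precisely when it is locally free there, but in fact we only need that $f^*E$ is a quotient-free-of-torsion; the cleaner route is: at a smooth point $x$ of $X$, the local ring $\mathcal{O}_{X,x}$ is a DVR, $f^*E$ near $x$ is a finitely generated module over it, and its torsion submodule is the kernel of $f^*E\to (f^*E)^{\vee\vee}$; but $E$ torsionfree on the curve $Y$ means $E$ is locally free at the smooth point $f(x)$ when $f(x)\notin S_Y$, and when $f(x)\in S_Y$ the EOSL condition forces $x\in S_X$, a contradiction. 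So whenever $x$ is a smooth point of $X$, $f(x)$ is a smooth point of $Y$, hence $E$ is locally free near $f(x)$, hence $f^*E$ is locally free near $x$, hence torsionfree there. If instead $x\in S_X$, then by EOSL the morphism $f$ is étale at $x$; étale morphisms are flat, so $f^*$ is exact near $x$ and preserves the property of being a subsheaf of a torsionfree sheaf — concretely, $\mathcal{O}_{X,x}$ is flat over $\mathcal{O}_{Y,f(x)}$ and for a torsionfree $\mathcal{O}_{Y,f(x)}$-module $E_{f(x)}$, the module $E_{f(x)}\otimes\mathcal{O}_{X,x}$ is torsionfree because $\mathcal{O}_{X,x}$ is a localization of an étale, hence normal-in-the-relevant-sense, extension; the key point is that a nonzerodivisor in $\mathcal{O}_{Y,f(x)}$ remains a nonzerodivisor in the flat extension $\mathcal{O}_{X,x}$. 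This covers all points of $X$, so $f^*E$ is torsionfree.

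\medskip

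I expect the main subtlety to be the torsionfreeness claim at the singular points $x\in S_X$, where $E$ itself need not be locally free; the argument there genuinely uses that $f$ is étale (not merely flat or finite) at $x$, which is exactly what EOSL supplies, and one must be careful that "torsionfree" is preserved — this reduces to checking that nonzerodivisors pull back to nonzerodivisors along the flat local homomorphism $\mathcal{O}_{Y,f(x)}\to\mathcal{O}_{X,x}$, which holds since faithfully flat (indeed just flat, as the extension is injective on fibers and local) base change preserves regular elements.
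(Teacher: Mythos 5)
The one step in your write-up that is actually false is the opening claim that ``a coherent sheaf on $Y$ is locally free if and only if it is torsionfree'' together with the remark that ``this already suffices.'' That equivalence holds only at smooth points of $Y$. On a singular curve a torsionfree sheaf need not be locally free: if $\nu\,:\,\widehat{Y}\,\longrightarrow\, Y$ is the normalization of a nodal curve, then $\nu_*\mathcal{O}_{\widehat{Y}}$ (or the ideal sheaf of the node) is torsionfree but not locally free at the node. Since the whole point of the EOSL hypothesis \eqref{e1} is that $Y$ may be singular, torsionfreeness of $f_*\mathcal{O}_X$ alone proves nothing at the points of $S_Y$; indeed Example~1 of the paper (the normalization map itself, which is not EOSL) gives a finite map with $f_*\mathcal{O}_X$ torsionfree but not locally free. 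So that sentence must be deleted; it cannot be the proof.

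Fortunately the case-by-case argument you give next is correct and is essentially the paper's own: on $Y\setminus S_Y$ all relevant points (including the branch points, which lie in the smooth locus by \eqref{e1}) are smooth points of $Y$, where torsionfree does imply locally free, or where finiteness over a smooth curve gives flatness; and near $S_Y$ the EOSL condition makes $f$ \'etale, hence $f_*\mathcal{O}_X$ is locally free there. (The paper also records the quicker global statement that $f$ is flat outright --- flat over the smooth points of $Y$ because $X$ is a reduced curve, and \'etale over $S_Y$ --- so the direct image under the finite flat map $f$ is locally free.) For part (2), which the paper declares evident, your reduction is right; the only imprecision is at $x\in S_X$: what must be checked is that nonzerodivisors of $\mathcal{O}_{X,x}$, not merely the images of nonzerodivisors of $\mathcal{O}_{Y,f(x)}$, act injectively on $E_{f(x)}\otimes\mathcal{O}_{X,x}$. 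This is immediate from \'etaleness, e.g.\ because the induced map of completions $\widehat{\mathcal{O}}_{Y,f(x)}\,\longrightarrow\,\widehat{\mathcal{O}}_{X,x}$ is an isomorphism ($k$ being algebraically closed) and torsionfreeness can be tested after completion, or because $\mathcal{O}_{X,x}$ tensored with the total quotient ring of $\mathcal{O}_{Y,f(x)}$ is the total quotient ring of $\mathcal{O}_{X,x}$; but as stated your ``key point'' addresses only $R$-torsion, not $S$-torsion, and should be tightened accordingly.
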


\begin{proof} The map $f$ is flat and hence $f_*{\mathcal O}_X$ is locally free. To see this another way, note 
that $f_*{\mathcal O}_X$ is torsionfree and hence it is locally free on $Y\setminus S_Y$, where $S_Y$ is the 
singular locus of $Y$. Now since $f$ is \'etale over neighborhoods of points of $S_Y$ it follows immediately 
that $f_*{\mathcal O}_X$ is locally free on a neighborhood of $S_Y$. Hence $f_*{\mathcal O}_X$ is locally free 
on entire $Y$.

The second statement is evident.
\end{proof}

Take an EOSL map $f\, :\, X\, \longrightarrow\, Y$. From Lemma \ref{lem1} we know that
$f_*{\mathcal O}_X$ is locally free. Let
\begin{equation}\label{e2}
F_1 \, \subset\, F_2\, \subset\, \cdots \, \subset\, F_m\,=\, f_*{\mathcal O}_X
\end{equation}
be the Harder--Narasimhan filtration of $f_*{\mathcal O}_X$ (see \cite{HL}). Note that $m\,=\,1$ if
$f_*{\mathcal O}_X$ is semistable. The subsheaf $F_1\, \subset\, f_*{\mathcal O}_X$ in \eqref{e2} is
called the maximal semistable subsheaf of $f_*{\mathcal O}_X$, and $\frac{\text{degree}(F_1)}{\text{rank}(F_1)}\, \in\, {\mathbb Q}$
is denoted by $\mu_{\rm max}(f_*{\mathcal O}_X)$ \cite{HL}. In general, $\frac{\text{degree}(V)}{\text{rank}(V)}\, \in\, {\mathbb Q}$
is denoted by $\mu (V)$.

Since $f_*{\mathcal O}_X$ is locally free, the pullback $f^*f_*{\mathcal O}_X$ is locally free.
In view of this, the proof of the following lemma is identical to the
proof in the special case where both $X$ and $Y$ are smooth \cite{BP}.

\begin{lemma}\label{lem2}
For the subsheaf $F_1$ in \eqref{e2},
$${\rm degree}(F_1)\,=\, 0.$$
\end{lemma}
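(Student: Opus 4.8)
The plan is to reduce the claim to a statement about degrees of pushforwards and pullbacks. First I would recall the two basic facts that hold for any EOSL map $f\,:\,X\,\longrightarrow\,Y$: by Lemma \ref{lem1} the sheaf $f_*\mathcal{O}_X$ is locally free, so the adjunction morphism $\mathcal{O}_Y\,\hookrightarrow\, f_*\mathcal{O}_X$ realizes $\mathcal{O}_Y$ as a subsheaf (indeed a subbundle away from the branch locus, but even as a subsheaf globally). In particular $\mu_{\max}(f_*\mathcal{O}_X)\,\geq\,\mu(\mathcal{O}_Y)\,=\,0$, which gives $\operatorname{degree}(F_1)\,\geq\,0$.

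For the reverse inequality, the key point is to show $\operatorname{degree}(f_*\mathcal{O}_X)\,\leq\,0$, or more precisely that $\mu_{\max}(f_*\mathcal{O}_X)\,\leq\,0$. The natural tool is the trace/norm map: since $f$ is generically étale, there is a trace homomorphism $\operatorname{Tr}\,:\,f_*\mathcal{O}_X\,\longrightarrow\,\mathcal{O}_Y$ splitting the inclusion $\mathcal{O}_Y\,\hookrightarrow\,f_*\mathcal{O}_X$ after inverting the degree $n\,=\,\deg f$ (or unconditionally if $\operatorname{char} k\nmid n$). Rather than worry about characteristic, I would instead use the standard duality argument: $f_*\mathcal{O}_X$ carries a nondegenerate (generically) symmetric bilinear pairing valued in $\mathcal{O}_Y$ coming from the multiplication on $\mathcal{O}_X$ together with the trace, which identifies $f_*\mathcal{O}_X$ with a subsheaf of its dual $(f_*\mathcal{O}_X)^\vee$. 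This forces $\operatorname{degree}(f_*\mathcal{O}_X)\,\leq\,\operatorname{degree}((f_*\mathcal{O}_X)^\vee)\,=\,-\operatorname{degree}(f_*\mathcal{O}_X)$, hence $\operatorname{degree}(f_*\mathcal{O}_X)\,\leq\,0$. Combined with the splitting $f_*\mathcal{O}_X\,\cong\,\mathcal{O}_Y\oplus Q$ (rationally), one gets that the quotient $Q$ has nonpositive degree, and since $Q$ is a quotient of a bundle whose maximal slice is $F_1/\text{(part of }\mathcal{O}_Y)$, a short argument with slopes of sub- and quotient sheaves yields $\operatorname{degree}(F_1)\,\leq\,0$.

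More carefully, the cleanest route avoids the trace altogether: one notes that $\mathcal{O}_Y\,\hookrightarrow\,f_*\mathcal{O}_X$ realizes $\mathcal{O}_Y$ inside $F_1$ (since $\mathcal{O}_Y$ is semistable of slope $0\,\leq\,\mu(F_1)$ and $F_1$ is the maximal destabilizing subsheaf, any subsheaf of slope $\geq\,\mu_{\max}$ that is semistable lands in $F_1$). Then the quotient $f_*\mathcal{O}_X/\mathcal{O}_Y$ has a subsheaf $F_1/\mathcal{O}_Y$ of slope $\geq\,0$ unless $F_1\,=\,\mathcal{O}_Y$. On the other hand, applying $\mathcal{H}om(-,\mathcal{O}_Y)$ to $0\,\to\,\mathcal{O}_Y\,\to\,f_*\mathcal{O}_X\,\to\,(f_*\mathcal{O}_X)/\mathcal{O}_Y\,\to\,0$ and using that $(f_*\mathcal{O}_X)^\vee\,\cong\,f_*(\omega_{X/Y}^{-1})\otimes(\cdots)$—or just using that $f_*\mathcal{O}_X$ is self-dual up to a line bundle via Grothendieck duality on the étale locus—produces the opposite bound on degrees. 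Since $X$ and $Y$ are singular, the honest thing is to run everything on the smooth loci $Y\setminus S_Y$ and $X\setminus S_X$, where $f$ restricts to a finite flat (indeed étale on $Y\setminus B_f$) map between smooth curves, compute there, and then observe that degrees and the Harder–Narasimhan filtration are insensitive to the removal of finitely many points since $f_*\mathcal{O}_X$ is locally free on all of $Y$.

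The main obstacle I anticipate is getting the self-duality (or the trace splitting) to work cleanly in arbitrary characteristic, including when $\operatorname{char} k$ divides $n\,=\,\deg f$, where the trace map $\operatorname{Tr}\,:\,f_*\mathcal{O}_X\,\to\,\mathcal{O}_Y$ can vanish and the pairing degenerates even generically along the wild part of the ramification. The paper sidesteps part of this by the EOSL hypothesis ($f$ is étale over the singular loci), but ramification elsewhere can still be wild. The safe resolution is to use relative duality: for the finite flat morphism $f$ of curves, $(f_*\mathcal{O}_X)^\vee\,\cong\,f_*(\omega_f)$ where $\omega_f\,=\,\omega_X\otimes f^*\omega_Y^{-1}$ is the relative dualizing sheaf, which on the étale locus is trivial; comparing degrees via Riemann–Roch (or via $\deg f_*\mathcal{O}_X\,=\,-\tfrac12\deg(\text{discriminant})\,\leq\,0$, the discriminant being effective since it is cut out by the different) gives $\operatorname{degree}(f_*\mathcal{O}_X)\,\leq\,0$ unconditionally. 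Feeding this into the slope inequalities forces $\operatorname{degree}(F_1)\,=\,0$, as the excerpt already indicates the argument "is identical to the proof in the special case where both $X$ and $Y$ are smooth," once one has checked that the local freeness from Lemma \ref{lem1} lets one transport the smooth-case computation verbatim.
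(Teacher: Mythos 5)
Your lower bound is fine and matches the paper: the adjunction inclusion ${\mathcal O}_Y\,\subset\, f_*{\mathcal O}_X$ gives $\mu(F_1)\,=\,\mu_{\rm max}(f_*{\mathcal O}_X)\,\geq\,0$. The gap is in the upper bound. Every tool you propose for it --- the trace pairing, self-duality of $f_*{\mathcal O}_X$ up to torsion or a line bundle, relative duality $(f_*{\mathcal O}_X)^\vee\,\cong\, f_*\omega_f$, and the discriminant formula ${\rm degree}(f_*{\mathcal O}_X)\,=\,-\tfrac12\,{\rm degree}({\rm disc})\,\leq\,0$ --- controls only the \emph{total} degree of $f_*{\mathcal O}_X$, not the degree of its maximal destabilizing subsheaf $F_1$. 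A bundle of nonpositive total degree can easily contain subsheaves of positive degree (e.g.\ $L\oplus L^{-1}$ with ${\rm degree}(L)\,>\,0$ on a curve of positive genus), so the promised ``short argument with slopes of sub- and quotient sheaves'' does not exist as stated: knowing ${\rm degree}(Q)\,\leq\,0$ for a rational complement $Q$ of ${\mathcal O}_Y$ gives no bound on $\mu_{\rm max}$. Likewise, a generically nondegenerate pairing embedding $f_*{\mathcal O}_X$ into its dual only yields $\mu_{\rm max}(f_*{\mathcal O}_X)+\mu_{\rm min}(f_*{\mathcal O}_X)\,\leq\,0$, which is perfectly compatible with $\mu_{\rm max}\,>\,0$. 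So the step ``${\rm degree}(f_*{\mathcal O}_X)\,\leq\,0\ \Rightarrow\ {\rm degree}(F_1)\,\leq\,0$'' is a genuine hole.

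What is actually needed, and what the paper uses, is a bound on the slope of \emph{every} subsheaf: the estimate $\mu_{\rm max}(f_*{\mathcal V})\,\leq\,\mu_{\rm max}({\mathcal V})/{\rm degree}(f)$ of \cite[Lemma 2.2]{BP}, applied to ${\mathcal V}\,=\,{\mathcal O}_X$. Its proof goes through adjunction, not duality: if $W\,\subseteq\, f_*{\mathcal O}_X$ is the maximal semistable subsheaf, adjunction gives a nonzero map $f^*W\,\longrightarrow\,{\mathcal O}_X$, and since pullback along a finite generically smooth map preserves semistability (compare Proposition \ref{prop2}), a positive slope for $W$ would force a nonzero map from a semistable sheaf of positive slope to ${\mathcal O}_X$, which is impossible; the EOSL hypothesis and Lemma \ref{lem1} are what allow this smooth-curve argument to be transported to the present setting. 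In particular your concern about the trace vanishing under wild ramification is moot, because the correct argument never uses the trace; but as written your proposal needs that adjunction/semistability step (or an explicit citation of \cite[Lemma 2.2]{BP}) substituted for the duality and discriminant considerations, which by themselves cannot close the proof.
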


\begin{proof}
This is proved in \cite[p.~ 12825, (2.7)]{BP} assuming that $X$ and $Y$ are smooth. Since this lemma turns out to be
crucial here, we give the details of its proof.

Since $f^*{\mathcal O}_Y\,=\, {\mathcal O}_X$, it follows from the adjunction formula (see \cite[p.~110]{Ha}) that
$$
{\mathcal O}_Y\,\, \subset\,\, f_*{\mathcal O}_X.
$$
This implies that
\begin{equation}\label{n1}
\mu(F_1)\,=\, \mu_{\rm max}(f_*{\mathcal O}_X) \, \geq\, \mu_{\rm max}({\mathcal O}_Y)\,=\, 0.
\end{equation}

On the other hand, a result on general vector bundles on $X$ says the following:
Since $f^*V$ is locally free for any torsionfree sheaf $V$ on $X$, for any vector bundle $\mathcal{V}$ on $X$,
$$
\mu_{\rm max}(f_*{\mathcal V})\,\, \leq\,\, \frac{\mu_{\rm max}(\mathcal{V})}{{\rm degree}(f)}
$$
(see \cite[p.~12824, Lemma 2.2]{BP}). Substituting ${\mathcal O}_X$ in place of $\mathcal V$ we conclude that
$$
\mu_{\rm max}(f_*{\mathcal V})\,\, \leq\,\, 0.
$$
This and \eqref{n1} together completes the proof.
\end{proof}

\begin{proposition}\label{prop1}
The subsheaf $F_1\, \subset\, f_*{\mathcal O}_X$ in \eqref{e2} is a subbundle, or in other words,
the quotient $(f_*{\mathcal O}_X)/F_1$ is locally free.
\end{proposition}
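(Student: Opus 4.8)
The plan is to reduce the assertion to the points of $S_Y$ and there transfer it to the normalization. First, $F_1$ is saturated in $f_*{\mathcal O}_X$, i.e. the quotient $(f_*{\mathcal O}_X)/F_1$ is torsionfree: in the filtration \eqref{e2} each $F_i/F_{i-1}$ is semistable, hence torsionfree, so $(f_*{\mathcal O}_X)/F_1\,=\,F_m/F_1$, being an iterated extension of the $F_i/F_{i-1}$ with $i\ge 2$, is torsionfree. Over the smooth locus $Y\setminus S_Y$ a torsionfree coherent sheaf is locally free, so $(f_*{\mathcal O}_X)/F_1$ is locally free there and $F_1$ is already a subbundle on $Y\setminus S_Y$. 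It therefore suffices to show that $(f_*{\mathcal O}_X)/F_1$ is locally free at each point $y\in S_Y$; equivalently, since $f_*{\mathcal O}_X$ is locally free (Lemma \ref{lem1}), that $F_1$ is locally free at $y$. By \eqref{e1}, $f$ is étale over a neighbourhood of every such $y$.

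Let $\nu_X\colon\widehat X\to X$ and $\nu_Y\colon\widehat Y\to Y$ be the normalizations, and $f'\colon\widehat X\to\widehat Y$ the induced map. Since $f$ is étale over all of $S_Y$ while $\nu_Y$ is an isomorphism over $Y\setminus S_Y$, the square with arrows $\nu_X,\nu_Y,f,f'$ is Cartesian: near a point of $S_X\,=\,f^{-1}(S_Y)$ the fibre product $X\times_Y\widehat Y$ is computed from a local ring of $X$ that is étale over the corresponding local ring of $Y$, so it is already normal there, whence $\widehat X\,=\,X\times_Y\widehat Y$. As $f$ is finite and flat, base change gives an isomorphism $\nu_Y^*(f_*{\mathcal O}_X)\,\cong\,f'_*{\mathcal O}_{\widehat X}$ of locally free sheaves on the smooth curve $\widehat Y$; let $F_1'\subset f'_*{\mathcal O}_{\widehat X}$ be its maximal semistable subsheaf, which on a smooth curve is automatically a subbundle. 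The plan is now to prove that, under this isomorphism, $\nu_Y^*F_1$ maps isomorphically onto $F_1'$; in particular $\nu_Y^*F_1$ is then torsionfree, and a torsionfree coherent sheaf on a reduced curve whose pullback to the normalization is torsionfree is locally free — so $F_1$ would be locally free at $y$, as required.

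The one genuinely delicate point is thus the comparison $\nu_Y^*F_1\,\cong\,F_1'$, equivalently the vanishing of the torsion of $\nu_Y^*F_1$. One half is routine: the image $\widetilde F_1$ of $\nu_Y^*F_1$ in $f'_*{\mathcal O}_{\widehat X}$ agrees with $F_1'$ over $\widehat Y\setminus\nu_Y^{-1}(S_Y)$, it contains ${\mathcal O}_{\widehat Y}\,=\,\nu_Y^*{\mathcal O}_Y$ by the adjunction formula for $f'$, and since $\mu_{\rm max}(f'_*{\mathcal O}_{\widehat X})\,=\,0$ — the analogue of Lemma \ref{lem2} on $\widehat Y$ — one gets $\mu_{\rm max}(\widetilde F_1)\,=\,0$, so $\widetilde F_1\subseteq F_1'$. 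The remaining task is to rule out torsion of $\nu_Y^*F_1$ supported over $S_Y$, i.e. to prove $\widetilde F_1\,=\,F_1'$; this should follow by combining the equality of degrees $\deg F_1\,=\,\deg F_1'\,=\,0$ with the uniqueness of the Harder--Narasimhan filtration and the compatibility of the whole diagram $\widehat X\to X$, $\widehat Y\to Y$ with $f$ and $f'$. This is precisely the step where one uses that $f$ is étale over $S_Y$, and not merely the local freeness of $f_*{\mathcal O}_X$ supplied by Lemma \ref{lem1}; everything else reduces to the smooth case treated in \cite{BP}.
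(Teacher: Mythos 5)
Your reduction to the points of $S_Y$, the Cartesian square with the normalization, and the criterion ``torsionfree with torsionfree pullback to the normalization implies locally free'' are all sound (the criterion follows from a short count: the pullback, being torsionfree over the semilocal Dedekind ring $\mathcal{O}_{\widehat Y,\nu_Y^{-1}(y)}$, is free of rank $n=\mathrm{rank}(F_1)$, and comparing fibres over $y$ gives that $F_1$ needs only $n$ generators at $y$, hence is free there). But the heart of the proposition is missing, in two places. First, the inference ``$\mu_{\rm max}(\widetilde F_1)=0$, so $\widetilde F_1\subseteq F_1'$'' is invalid: containment in the maximal destabilizing subsheaf requires that every quotient of $\widetilde F_1$ have nonnegative degree (a $\mu_{\min}$ condition), not that $\mu_{\rm max}(\widetilde F_1)=0$; on ${\mathbb P}^1$ the subsheaf $\mathcal{O}\oplus\mathcal{O}(-1)\subset\mathcal{O}\oplus\mathcal{O}(-1)$ has $\mu_{\rm max}=0$ but is not contained in the maximal destabilizing subsheaf $\mathcal{O}$. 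Worse, degree bookkeeping pushes the wrong way: if $F_1$ failed to be locally free at a point of $S_Y$ (say a stalk $\mathcal{O}^{a}\oplus\mathfrak m^{b}$, $b>0$, at a node), then $\nu_Y^*F_1$ automatically has torsion and $\deg\widetilde F_1<\deg F_1=0$, so nothing forces $\widetilde F_1\subseteq F_1'$ a priori. Second, the ``remaining task'' you defer --- ruling out torsion of $\nu_Y^*F_1$ over $S_Y$ --- is not a residual verification: it is exactly equivalent to the statement being proved, and the ingredients you propose (equality of degrees, uniqueness of the Harder--Narasimhan filtration, compatibility of the diagram) do not touch the torsion at $S_Y$, precisely because pullback along $\nu_Y$ of a non-free torsionfree stalk both creates torsion and drops the degree, and HN uniqueness says nothing about stalks at singular points.

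What actually closes this gap in the paper is an idea absent from your proposal: the maximal semistable subsheaf $\widehat F\subset f'_*\mathcal{O}_{\widehat X}$ is a sheaf of \emph{subalgebras} (by the corresponding lemma of \cite{BP}), so its relative spectrum gives a covering $\phi'\colon \widehat Y'\to\widehat Y$ with $\widehat F=\phi'_*\mathcal{O}_{\widehat Y'}$, which is \'etale because $\deg\widehat F=0$; the EOSL hypothesis is then used to descend $\phi'$ to an \'etale covering $\phi\colon Y'\to Y$ through which $f$ factors away from $S_Y$. Maximality (any subsheaf of $f_*\mathcal{O}_X$ of degree $\ge 0$ lies in $F_1$, since subsheaves of $F_1$ have degree $\le 0$ and subsheaves of $(f_*\mathcal{O}_X)/F_1$ have negative degree) gives $\phi_*\mathcal{O}_{Y'}\subseteq F_1$, and comparison over $\widehat Y$ yields $F_1=\phi_*\mathcal{O}_{Y'}$, which is visibly a subbundle because $\phi$ is \'etale and $f$ is \'etale near $S_Y$. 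In other words, the local freeness of $F_1$ at $S_Y$ is obtained by identifying $F_1$ as the direct image of the structure sheaf of an \'etale cover, not by a torsion/degree comparison with the normalization; without some substitute for that construction, your outline does not yield the proposition.
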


\begin{proof}
Let $\beta\, :\, \widehat{Y}\, \longrightarrow\, Y$ be the normalization of $Y$; so $\widehat{Y}$
is an irreducible smooth projective curve. Consider the fiber product
\begin{equation}\label{fp}
\begin{matrix}
\widehat{X}\,:=\, X\times_Y \widehat{Y} & \stackrel{\beta'}{\longrightarrow} & X\\
\,\,\, \Big\downarrow f' && \,\,\, \Big\downarrow f\\
\widehat{Y} & \stackrel{\beta}{\longrightarrow} & Y
\end{matrix}
\end{equation}
The given condition that $f$ is EOSL implies that $\widehat{X}$ in \eqref{fp} is smooth. Indeed, since $f$
is \'etale at $S_X\cup f^{-1}(S_Y)$, we have $f(S_X)\,=\,S_Y$. By base change $f'$ is \'etale over
$\beta^{-1}(S_Y)$. Hence all points in $(\beta')^{-1}(S_X)\,=\,(f')^{-1}(\beta^{-1}(S_Y))$ are smooth points
of $\widehat X$.

Since $f_*{\mathcal O}_X$ is locally free, the pullback
$\beta^*f_*{\mathcal O}_X$ is also locally free. We have
\begin{equation}\label{e3}
\beta^*f_*{\mathcal O}_X\,=\, f'_* {\mathcal O}_{\widehat{X}},
\end{equation}
where $f'$ is the map in \eqref{fp}. Let
$\widehat{F}\, \subset\, f'_* {\mathcal O}_{\widehat{X}}$ be the maximal semistable subsheaf (so
it is the first nonzero
term of the Harder--Narasimhan filtration of $f'_* {\mathcal O}_{\widehat{X}}$). We know that
${\rm degree}(\widehat{F})\,=\, 0$ \cite[p.~ 12825, (2.7)]{BP}. Therefore, from Lemma \ref{lem2} it
follows that for the isomorphism in \eqref{e3},
\begin{equation}\label{e4}
\beta^*f_*{\mathcal O}_X\, \supset \, \beta^* F_1\, \, \subset\, \widehat{F}
\, \subset\, f'_* {\mathcal O}_{\widehat{X}}.
\end{equation}

Note that the algebra structure of the sheaf ${\mathcal O}_{\widehat{X}}$ makes $f'_* {\mathcal O}_{\widehat{X}}$,
where $f'$ is as in \eqref{fp}, a sheaf of algebras over $\widehat Y$, and the corresponding spectrum is the (ramified) covering
$f'$. The subsheaf $\widehat{F}\, \subset\, f'_* {\mathcal O}_{\widehat{X}}$ in \eqref{e4} turns out to be a sheaf of subalgebras
(see \cite[p.~12826, Lemma 2.4]{BP}). Let $$\phi'\,\,:\, \,\widehat{Y}'\, \,\longrightarrow\,\, \widehat{Y}$$
be the (possibly ramified) covering map given by the spectrum of the sheaf of algebras $\widehat{F}$. Therefore,
we have
\begin{equation}\label{e9}
\widehat{F}\,\,=\, \,\phi'_*{\mathcal O}_{{\widehat{Y}'}}.
\end{equation}
The fact that $\widehat{F}$ is a sheaf of subalgebras of $f'_* {\mathcal O}_{\widehat{X}}$ implies that we have a morphism
\begin{equation}\label{e6}
q\, :\, \widehat{X}\, \longrightarrow\, \widehat{Y}'
\end{equation}
such that $\phi'\circ q\,=\, f'$ (see \cite[p.~ 12828, (2.11)]{BP} and the line following it). Note that this implies that
$\phi'_*{\mathcal O}_{{\widehat{Y}'}}\, \subset\, f'_* {\mathcal O}_{\widehat{X}}$, and \eqref{e9} implies that the
two subsheaves $\phi'_*{\mathcal O}_{{\widehat{Y}'}}$ and $\widehat{F}$ of $f'_* {\mathcal O}_{\widehat{X}}$ coincide.
Since $\text{degree}(\widehat{F})\,=\, 0$, using
\cite[p.~12825, Lemma 2.3]{BP} it follows that $\phi'$ is actually \'etale (see the lines following \cite[p.~12829, (2.12)]{BP}).

The above \'etale covering $\phi'$ of $\widehat{Y}$ produces an \'etale covering
\begin{equation}\label{e5}
\phi\, :\, Y'\, \longrightarrow\, Y.
\end{equation}
To see this, first note that the restriction of $\phi'$ to the complement
$\widehat{Y}'\setminus (\beta \circ\phi')^{-1}(S_Y)$ produces an \'etale covering
\begin{equation}\label{e7}
\phi_0\,:\, Y'_0\, \longrightarrow\, Y\setminus S_Y
\end{equation}
because the restriction
$$
\beta\big\vert_{\widehat{Y}\setminus \beta^{-1}(S_Y)}\, :\, \widehat{Y}\setminus \beta^{-1}(S_Y)
\, \longrightarrow\, Y\setminus S_Y
$$
is an isomorphism. The map $q$ in \eqref{e6} produces a map
$$
q_0\, :\, X\setminus f^{-1}(S_Y) \, \longrightarrow\, Y'_0
$$
(see \eqref{e7}). Indeed, $q_0$ is simply the restriction of $q$ to $\widehat{X}\setminus
(\beta\circ f')^{-1}(S_Y)$ (note that $X\setminus f^{-1}(S_Y)\,=\,\widehat{X}\setminus
(\beta\circ f')^{-1}(S_Y)$). Since $\phi'\circ q\,=\, f'$, it follows that
\begin{equation}\label{e8}
\phi_0\circ q_0 \,=\, f\big\vert_{X\setminus f^{-1}(S_Y)},
\end{equation}
where $\phi_0$ is the map in \eqref{e7}. Now from \eqref{e1} and \eqref{e8} it follows that
$\phi_0$ extends to an \'etale covering $\phi$ as in \eqref{e5}.

The identification of $\widehat{Y}'\setminus (\beta \circ\phi')^{-1}(S_Y)$ with $Y'_0$ extends to a map
$$
\beta_1\, :\, \widehat{Y}' \, \longrightarrow\, Y'
$$
because $\widehat{Y}'$ is smooth. Since the diagram
$$
\begin{matrix}
\widehat{Y}' & \stackrel{\beta_1}{\longrightarrow} & Y'\\
\,\,\, \Big\downarrow\phi' && \,\,\,\Big\downarrow\phi\\
\widehat{Y} & \stackrel{\beta}{\longrightarrow} & Y
\end{matrix}
$$
is Cartesian, we conclude that
\begin{equation}\label{e10}
\phi'_*{\mathcal O}_{\widehat{Y}'}\,=\, \beta^* (\phi_*{\mathcal O}_{Y'}).
\end{equation}

We have $\text{degree}(\phi_*{\mathcal O}_{Y'})\,=\, 0$ because $\phi$ is \'etale
(see \cite[p.\,13825, Lemma 2.3]{BP}). Hence from Lemma \ref{lem2} it follows that
$$
\phi_*{\mathcal O}_{Y'}\, \subset\, F_1.
$$
Consequently, \eqref{e10} implies that $\phi'_*{\mathcal O}_{\widehat{Y}'}\,\subset\, 
\beta^*F_1
$.
This and \eqref{e9} together give that $\widehat{F}\,\subset\, \beta^*F_1$. From this and
\eqref{e4} we conclude that
\begin{equation}\label{e11}
\beta^* F_1\, \, =\, \widehat{F}
\end{equation}
as subsheaves of $\beta^*f_*{\mathcal O}_X\,=\, f'_* {\mathcal O}_{\widehat{X}}$ (see \eqref{e3}).
On the other hand, from \eqref{e9} and \eqref{e10} we have $\widehat{F}\,=\, \beta^* (\phi_*{\mathcal O}_{Y'})$.
Combining this with \eqref{e11} it is deduced that $F_1\,=\, \phi_*{\mathcal O}_{Y'}$.
Now observe that $\phi_*{\mathcal O}_{Y'}$ is subbundle of $f_*{\mathcal O}_X$ because $\phi$ is \'etale and
$f$ is \'etale over a neighborhood of $S_Y\cup f(S_X)$. This completes the proof.
\end{proof}

\begin{corollary}\label{cor1}
The notation of the proof of Proposition \ref{prop1} is used.
\begin{enumerate}
\item ${\rm rank}(F_1)\,=\, {\rm rank}(\widehat{F})$.

\item $F_1\,=\, {\mathcal O}_Y$ if and only if $\widehat{F}\,=\, {\mathcal O}_{\widehat{Y}}$.
\end{enumerate}
\end{corollary}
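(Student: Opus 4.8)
The plan is to extract both assertions directly from the identity $\beta^*F_1\,=\,\widehat{F}$ obtained in \eqref{e11} during the proof of Proposition \ref{prop1}, supplemented by the identifications $F_1\,=\,\phi_*{\mathcal O}_{Y'}$ and $\widehat{F}\,=\,\phi'_*{\mathcal O}_{\widehat{Y}'}\,=\,\beta^*(\phi_*{\mathcal O}_{Y'})$ established there. For part (1), I would simply note that $\beta\,:\,\widehat{Y}\,\longrightarrow\,Y$ is finite and surjective, and generically an isomorphism, so that the rank of a coherent sheaf (computed at the generic point) is unchanged by $\beta^*$; hence ${\rm rank}(\widehat{F})\,=\,{\rm rank}(\beta^*F_1)\,=\,{\rm rank}(F_1)$.

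For part (2), the forward implication is immediate: if $F_1\,=\,{\mathcal O}_Y$ then $\widehat{F}\,=\,\beta^*F_1\,=\,\beta^*{\mathcal O}_Y\,=\,{\mathcal O}_{\widehat{Y}}$. For the converse, suppose $\widehat{F}\,=\,{\mathcal O}_{\widehat{Y}}$. Then ${\rm rank}(\widehat{F})\,=\,1$, so by part (1) we get ${\rm rank}(F_1)\,=\,1$. Now recall from the proof of Lemma \ref{lem2} that ${\mathcal O}_Y\,\subset\, f_*{\mathcal O}_X$, and that this subsheaf, having slope $0\,=\,\mu_{\rm max}(f_*{\mathcal O}_X)$, is contained in the maximal semistable subsheaf $F_1$. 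Thus there is an inclusion ${\mathcal O}_Y\,\hookrightarrow\, F_1$ of sheaves of the same rank $1$ on the integral projective curve $Y$, so the quotient $F_1/{\mathcal O}_Y$ is a torsion sheaf whose length equals ${\rm degree}(F_1)-{\rm degree}({\mathcal O}_Y)\,=\,0-0\,=\,0$, using Lemma \ref{lem2}. Therefore $F_1/{\mathcal O}_Y\,=\,0$, i.e., $F_1\,=\,{\mathcal O}_Y$. Alternatively, $\widehat{F}\,=\,{\mathcal O}_{\widehat{Y}}$ forces $\phi'_*{\mathcal O}_{\widehat{Y}'}\,=\,{\mathcal O}_{\widehat{Y}}$, so $\phi'$ has degree one and is an isomorphism; since the square relating $\phi'$ to $\phi$ is Cartesian with $\beta$ surjective, $\phi$ also has degree one, hence $\phi$ is an isomorphism and $F_1\,=\,\phi_*{\mathcal O}_{Y'}\,=\,{\mathcal O}_Y$.

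There is essentially no real obstacle here; everything is a formal consequence of \eqref{e11} together with Lemma \ref{lem2}. The only points warranting a moment's care are the two standard facts invoked: that the rank of a coherent sheaf is unchanged under pullback along the normalization map (which is birational), and that an injection of torsion-free sheaves of equal rank and equal degree on an integral projective curve is necessarily an isomorphism.
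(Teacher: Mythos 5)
Your proof is correct and takes essentially the same route as the paper: part (1) is read off from the identity $\beta^*F_1\,=\,\widehat{F}$ of \eqref{e11}, and part (2) from the containments ${\mathcal O}_Y\,\subset\,F_1$ and ${\mathcal O}_{\widehat{Y}}\,\subset\,\widehat{F}$ forced by Lemma \ref{lem2}, reducing everything to a rank count. The only difference is that you spell out the detail the paper leaves implicit --- that an inclusion of rank-one torsion-free sheaves, both of degree zero, must be an equality --- which is a harmless (and welcome) addition.
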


\begin{proof}
The first statement follows immediately from \eqref{e11}.

Since ${\mathcal O}_Y\, \subset\, f_*{\mathcal O}_X$ and ${\mathcal O}_{\widehat{Y}}\, \subset\,
f'_*{\mathcal O}_{\widehat{X}}$, from Lemma \ref{lem2} it follows that
${\mathcal O}_Y\, \subset\, F_1$ and ${\mathcal O}_{\widehat{Y}}\, \subset\, \widehat{F}$. Therefore,
$F_1\,=\, {\mathcal O}_Y$ (respectively, $\widehat{F}\,=\, {\mathcal O}_{\widehat{Y}}$) if and
only if ${\rm rank}(F_1)\,=\, 1$ (respectively, ${\rm rank}(\widehat{F})\,=\, 1$). Now the second statement
follows from the first statement.
\end{proof}

\begin{proposition}\label{prop2}
Let $f\, :\, X\, \longrightarrow\, Y$ be an EOSL map. For any semistable vector bundle $E$ on $Y$
the pullback $f^*E$ is also semistable.
\end{proposition}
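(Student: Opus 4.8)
The plan is to reduce the statement to the case of a Galois covering, for which it follows from the uniqueness of the Harder--Narasimhan filtration, and to produce the required Galois covering within the class of EOSL maps.

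I argue by contradiction. Suppose $f^*E$ is not semistable, and let $W\,\subset\,f^*E$ be the maximal semistable subsheaf (the first nonzero term of the Harder--Narasimhan filtration); it is a saturated subsheaf and $\mu(W)\,=\,\mu_{\rm max}(f^*E)\,>\,\mu(f^*E)\,=\,{\rm degree}(f)\,\mu(E)$. The first step is to construct a ``Galois closure'' $\pi\,:\,\widetilde X\,\longrightarrow\,Y$ of $f$ which is again EOSL. By \eqref{e1}, the open set $U\,:=\,Y\setminus B_f$ contains $S_Y\cup f(S_X)$, and over $U$ the morphism $f$ restricts to a connected finite \'etale covering $f^{-1}(U)\,\longrightarrow\,U$. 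Let $\widetilde U\,\longrightarrow\,U$ be its Galois closure, with Galois group $G$; then $\widetilde U$ dominates $f^{-1}(U)$ over $U$ and $\widetilde U/G\,=\,U$. Since $B_f$ lies in the smooth locus of $Y$, this covering extends uniquely to a finite morphism $\pi\,:\,\widetilde X\,\longrightarrow\,Y$, where $\widetilde X$ is a reduced irreducible projective curve on which $G$ acts over $Y$ with $\widetilde X/G\,=\,Y$, the morphism $\pi$ is \'etale over a neighbourhood of $S_Y$ (because $\widetilde U\to U$ is) and hence EOSL, and $\pi$ factors as $\pi\,=\,f\circ g$ for a finite morphism $g\,:\,\widetilde X\,\longrightarrow\,X$. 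As in Lemma \ref{lem1}, both $\pi$ and $g$ are flat.

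Since $g$ is finite and flat, $g^*W\,\subset\,g^*f^*E\,=\,\pi^*E$ with $\mu(g^*W)\,=\,{\rm degree}(g)\,\mu(W)\,>\,{\rm degree}(g)\,{\rm degree}(f)\,\mu(E)\,=\,{\rm degree}(\pi)\,\mu(E)\,=\,\mu(\pi^*E)$, so $\pi^*E$ is not semistable. Let $\widetilde W\,\subset\,\pi^*E$ be its maximal semistable subsheaf; it is saturated, and since the Harder--Narasimhan filtration is canonical while $\pi^*E$ carries a natural $G$-linearization, $\widetilde W$ is $G$-invariant. Over the \'etale $G$-torsor $\pi^{-1}(U)\,=\,\widetilde U\,\to\,U$ the $G$-invariant subsheaf $\widetilde W\vert_{\pi^{-1}(U)}$ descends to a saturated subsheaf of $E\vert_U$; let $W_0\,\subset\,E$ be its saturation on all of $Y$, a nonzero proper subsheaf with $\pi^*W_0\vert_{\pi^{-1}(U)}\,=\,\widetilde W\vert_{\pi^{-1}(U)}$. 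By Lemma \ref{lem1}(2) applied to the EOSL map $\pi$, the quotient $\pi^*(E/W_0)\,=\,(\pi^*E)/(\pi^*W_0)$ is torsionfree, so $\pi^*W_0$ is saturated in $\pi^*E$. Thus $\pi^*W_0$ and $\widetilde W$ are saturated subsheaves of the vector bundle $\pi^*E$ agreeing over the dense open $\pi^{-1}(U)$; on a reduced irreducible curve two such subsheaves coincide, so $\widetilde W\,=\,\pi^*W_0$. Hence ${\rm degree}(\pi)\,\mu(W_0)\,=\,\mu(\widetilde W)\,=\,\mu_{\rm max}(\pi^*E)\,>\,\mu(\pi^*E)\,=\,{\rm degree}(\pi)\,\mu(E)$, i.e. $\mu(W_0)\,>\,\mu(E)$, which contradicts the semistability of $E$.

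The main obstacle I foresee is the first step: constructing the EOSL Galois closure $\pi\,:\,\widetilde X\,\longrightarrow\,Y$ and checking its properties -- that $\widetilde X$ is a reduced irreducible projective curve, that $\pi$ (hence $g$) is EOSL and flat, that $G$ acts with quotient $Y$, and that $\widetilde U\to U$ stays \'etale over $S_Y$. The other ingredients are routine: \'etale descent of $G$-invariant subsheaves across the $G$-torsor $\widetilde U\to U$, and the elementary fact that two saturated subsheaves of a vector bundle on a reduced irreducible curve that agree on a dense open set are equal. (Alternatively one can avoid the Galois closure altogether: write the inclusion $W\hookrightarrow f^*E$ by Grothendieck--Serre duality as a nonzero map $f_*(W\otimes\omega_{X/Y})\to E$, note $\omega_{X/Y}$ is a line bundle because $f$ is \'etale over $S_Y$, and bound $\mu_{\rm min}$ of the source using \cite[Lemma 2.2]{BP} and duality; but the Galois-theoretic route is cleaner to set up.)
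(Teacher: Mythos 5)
Your proposal follows the same strategy as the paper's own proof: pass to a $G$--Galois cover of $Y$ dominating $f$ that is still \'etale near $S_Y$, observe that the maximal destabilizing subsheaf of the pullback of $E$ is Galois--invariant by uniqueness of the Harder--Narasimhan filtration, descend it to a subsheaf of $E$, and contradict semistability. The differences are in two places. First, the construction of the cover: the paper builds $\widetilde{f}\colon Z\to Y$ by Harbater's formal patching, gluing the Galois closure of the \'etale part with induced trivial $G$--covers on formal neighbourhoods, whereas you assert that the Galois closure $\widetilde{U}\to U$ extends across $B_f$ simply because $B_f$ lies in the smooth locus. That assertion is essentially right (normalize over a smooth neighbourhood of $B_f$, glue along the overlap, and note that the $G$--action and the factorization through $X$ extend automatically), and it avoids formal patching altogether; but it is precisely the step you leave unproved, while it is the one the paper spends its technical effort on. Second, your descent bookkeeping differs slightly: you descend once along the \'etale $G$--torsor over $U\supset S_Y$ and then extend across the finitely many smooth points of $B_f$ by saturation, identifying $\pi^*W_0$ with $\widetilde{W}$ via Lemma \ref{lem1}(2); the paper descends separately over $Y\setminus S_Y$ and over an \'etale neighbourhood of $S_Y$ and glues. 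Both work.

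One genuine inaccuracy: the claim that $\widetilde{X}$ is an \emph{irreducible} curve is unjustified and can fail. A connected \'etale cover (in particular a Galois closure) of a curve with singular points need not be irreducible --- think of cycles of rational curves over a nodal curve, since $U$ contains $S_Y$. Your argument does not really need irreducibility (connectedness suffices once slopes on $\widetilde{X}$ are set up, e.g.\ with the polarization pulled back from $Y$, which keeps the Harder--Narasimhan filtration $G$--invariant), but as written the claim should be removed or justified; the paper's $Z$ is silently exposed to the same point. Similarly, both you and the paper use without comment that degree is multiplicative under the flat finite pullbacks for the saturated (possibly non--locally--free) subsheaves involved.
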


\begin{proof}
As before, $B_f\, \subset\, Y$ is the finite subset over which $f$ fails to be \'etale. Let $Y^o\,=\,Y\setminus B_f$.
Consider the \'etale cover $$f'\, :=\, f\big\vert_{X^o}\, :\,
X^o\, \longrightarrow\, Y^o$$ obtained by restricting $f$
to the complement $X^o:=X\setminus f^{-1}(B_f)$. Let
$$
f''\, :\, Z^o\, \longrightarrow\, Y^o
$$
be the Galois closure of $f'$ with the Galois group ${\rm Gal}(f'')$ being denoted by $G$. Since the map $f$ is EOSL,
it can be shown that the above map $f''$ extends to a ramified $G$--Galois cover
$$
\widetilde{f}\,\, :\,\, Z\,\, \longrightarrow\,\, Y,
$$
where $Z$ is a projective curve containing $Z^o$. To prove this, let
$y_1,\,\cdots,\, y_r$ be the singular points of $Y$. Let $\widehat{Y}_i$ be the formal neighborhood of 
$y_i$ in $Y$ and $\widehat{Y}_i^o\,=\,Y^o\times_Y Y_i$ for $1\,\le\, i\,\le \,r$. Note that for each $i\,\in\, \{1,\,
\cdots,\, r\}$, the map $f$ is \'etale 
over $y_i$ and $f''$ is the Galois closure of $f'$ which is the restriction of $f$. Hence the pullback of $f''$ along 
$\widehat{Y}_i^o \,\longrightarrow\, Y^o$ gives an isomorphism of $G$--Galois covers $$Z^o\times_{Y^o} \widehat{Y}_i^o 
\,\longrightarrow\, \widehat{Y}_i^o$$ with the $G$--Galois cover $\Ind^G_{\{e\}} \widehat{Y}_i^o \,\longrightarrow\, 
\widehat{Y}_i^o$ induced from the trivial cover defined by the identity map on $\widehat{Y}_i^o$. These isomorphisms allow us to 
patch $G$-Galois covers
$$\bigcup_{i=1}^r{\rm Ind}^G_{\{e\}} \widehat{Y}_i \,\,\,\longrightarrow\,\,\, \bigcup_{i=1}^r \widehat{Y}_i$$ and 
$Z^o \,\longrightarrow\, Y^o$ along $\bigcup_{i=1}^r\Ind^G_{\{e\}} \widehat{Y}_i^o \,\longrightarrow\, 
\bigcup_{i=1}^r 
\widehat{Y}_i^o$ to obtain the $G$-cover $\widetilde{f}\,:\, Z \,\longrightarrow\, Y$ (using \cite[Theorem 
3.1.9]{harbater} with the category of modules replaced by category of $G$-covers as in \cite[Theorem 
3.2.4]{harbater}).

Note that $\widetilde{f}$ is \'etale on a neighborhood
of $S_Y$. So $\widetilde f$ is flat. Let
$$
\phi\,:\,Z\, \longrightarrow\, X
$$ 
be the map such that $\widetilde{f}\,=\, f\circ\phi$.

Take any semistable vector bundle $E$ on $Y$. Assume that $f^*E$ is not semistable. Let $V\, \subset\,
f^*E$ be a subsheaf that destabilizes $f^*E$. Then
$\phi^*V$ destabilizes $\phi^*f^*E\,=\, \widetilde{f}^*E$.

Let $W\, \subset\, \widetilde{f}^*E$ be the
maximal semistable subsheaf of $\widetilde{f}^*E$ (in other words, it is the first nonzero term in the
Harder--Narasimhan filtration of $\widetilde{f}^*E$). Note that the natural action of the Galois
group $\text{Gal}(\widetilde{f})$ on $\widetilde{f}^*E$ preserves the above subsheaf $W$. Indeed, this
follows immediately from the uniqueness of the Harder--Narasimhan filtration.

The restriction of $W$ to
$Z\setminus {\widetilde{f}}^{-1}(S_Y)$ descends $Y\setminus S_Y$. On the other hand, the map
$f$ is \'etale over a neighborhood $U$ of $S_Y$, so the restriction of $W$ to $f^{-1}(U)$ descends to $U$.
Consequently, $W$ descends to a subsheaf of $E$. Since $W$ destabilizes $\widetilde{f}^*E$, it follows
immediately that this descend of $W$ to a subsheaf of $E$ destabilizes $E$. But $E$ is semistable.
In view of this contradiction we conclude that $f^*E$ is semistable.
\end{proof}

\section{Genuinely ramified maps}

Let $f\, :\, X\, \longrightarrow\, Y$ be an EOSL map. Consider the maximal semistable subsheaf $F_1\,\subset\,
f_*{\mathcal O}_X$. From Proposition \ref{prop1} we know that $F_1$ is a subbundle of $f_*{\mathcal O}_X$.

\begin{definition}\label{def1}
An EOSL map 
$f\, :\, X\, \longrightarrow\, Y$ will be called
\textit{genuinely ramified} if $F_1\,=\, {\mathcal O}_Y$.
\end{definition}

{}From Corollary \ref{cor1}(2) we know that $f$ is genuinely ramified if and only if the map
$f'\, :\, \widehat{X}\, \longrightarrow\, \widehat{Y}$ in \eqref{fp} is genuinely ramified. From
the proof the Proposition \ref{prop1} it follows that
the homomorphism of \'etale fundamental groups $$f_*\, :\, \pi_1^{\rm et}(X)
\,\longrightarrow\,\pi_1^{\rm et}(Y)$$ induced by $f$ is surjective if and only if
the homomorphism of \'etale fundamental groups $$f'_*\, :\, \pi_1^{\rm et}(\widehat{X})
\,\longrightarrow\,\pi_1^{\rm et}(\widehat{Y})$$ induced by $f'$ in \eqref{fp} is surjective.

\begin{theorem}\label{thm1}
Let $f\, :\, X\, \longrightarrow\, Y$ be an EOSL map between projective curves.
Then the following six statements are equivalent:
\begin{enumerate}
\item $f$ is genuinely ramified.

\item The map
$f'\, :\, \widehat{X}\, \longrightarrow\, \widehat{Y}$ in \eqref{fp} is genuinely ramified.

\item The homomorphism of \'etale fundamental groups $$f_*\, :\, \pi_1^{\rm et}(X)
\,\longrightarrow\,\pi_1^{\rm et}(Y)$$ induced by $f$ is surjective.

\item There is no nontrivial \'etale covering $\phi\, :\, Y'\, \longrightarrow\, Y$ admitting
a morphism $q\,:\, X\, \longrightarrow\, Y'$ such that $\phi\circ q \,=\, f$.

\item The fiber product $X\times_Y X$ is connected.

\item $\dim H^0(X,\, f^*f_* {\mathcal O}_X)\,=\, 1$.
\end{enumerate}
\end{theorem}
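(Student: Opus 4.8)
The plan is to reduce all of (2)--(6) to the already-known equivalences for the smooth map $f'\colon\widehat X\to\widehat Y$ of \eqref{fp}, which is the theorem of \cite{BP} recalled in the Introduction, using the structural facts proved above (Lemma \ref{lem1}, Proposition \ref{prop1}, Corollary \ref{cor1}). Since $f$ is flat (Lemma \ref{lem1}), flat base change along the two projections of $X\times_Y X$ gives $f^*f_*{\mathcal O}_X\cong p_*{\mathcal O}_{X\times_Y X}$ for a projection $p$, so that $\dim H^0(X,\,f^*f_*{\mathcal O}_X)=\dim H^0(X\times_Y X,\,{\mathcal O}_{X\times_Y X})$; as this last number is at least the number of connected components of $X\times_Y X$ (and $X\times_Y X$ is nonempty), the implication $(6)\Rightarrow(5)$ is immediate, and in particular $\dim H^0(X,\,f^*f_*{\mathcal O}_X)\ge1$ always. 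I would therefore prove the cycle $(1)\Rightarrow(6)\Rightarrow(5)\Rightarrow(1)$ and separately establish $(1)\Leftrightarrow(2)\Leftrightarrow(3)\Leftrightarrow(4)$.

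The equivalences among (1), (2), (3), (4) are essentially in hand: $(1)\Leftrightarrow(2)$ is Corollary \ref{cor1}(2); the surjectivity of $f_*$ is equivalent to that of $f'_*$ (noted just before the theorem), and \cite{BP} applied to the smooth cover $f'$ shows $f'_*$ is surjective precisely when $f'$ is genuinely ramified, giving $(3)\Leftrightarrow(2)$; and $(3)\Leftrightarrow(4)$ is the standard argument in the Galois category of finite \'etale covers — if $f=\phi\circ q$ with $\phi\colon Y'\to Y$ a connected \'etale cover of degree $\ge 2$ then $\operatorname{image}(f_*)\subseteq\operatorname{image}(\phi_*)$, a proper open subgroup of $\pi_1^{\rm et}(Y)$, while if $\operatorname{image}(f_*)$ is a proper (closed) subgroup it is contained in a proper open one, corresponding to a connected \'etale cover $\phi\colon Y'\to Y$ of degree $\ge2$ through which $f$ factors.

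For $(1)\Rightarrow(6)$ I would pass to the normalization square \eqref{fp}. From $f\circ\beta'=\beta\circ f'$ and \eqref{e3} one gets $\beta'^*\bigl(f^*f_*{\mathcal O}_X\bigr)=f'^*f'_*{\mathcal O}_{\widehat X}$; moreover ${\mathcal O}_X\hookrightarrow\beta'_*{\mathcal O}_{\widehat X}$ since $\beta'$ is finite surjective and $X$ is reduced, so tensoring by the locally free sheaf $f^*f_*{\mathcal O}_X$ and using the projection formula produces an injection $H^0(X,\,f^*f_*{\mathcal O}_X)\hookrightarrow H^0(\widehat X,\,f'^*f'_*{\mathcal O}_{\widehat X})$. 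Now if $f$ is genuinely ramified then so is $f'$ by Corollary \ref{cor1}(2), whence $\dim H^0(\widehat X,\,f'^*f'_*{\mathcal O}_{\widehat X})=1$ by \cite{BP}; combined with $\dim H^0(X,\,f^*f_*{\mathcal O}_X)\ge1$ this forces $(6)$.

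It remains to prove $(5)\Rightarrow(1)$, which I would do by contraposition using Proposition \ref{prop1}: if $F_1\ne{\mathcal O}_Y$, then (from the proof of that proposition) $F_1=\phi_*{\mathcal O}_{Y'}$ for an \'etale covering $\phi\colon Y'\to Y$ with $\operatorname{rank}(F_1)=\operatorname{rank}(\widehat F)\ge2$, and $F_1$ is in fact a sheaf of subalgebras of $f_*{\mathcal O}_X$, so taking relative spectra gives a morphism $q\colon X\to Y'$ with $\phi\circ q=f$; as $\phi$ is finite and $f$ is surjective, $q$ is surjective. Then $X\times_Y X$ surjects onto $Y'\times_Y Y'$, which is disconnected: the diagonal $\Delta_{Y'}$ is a section of the separated \'etale morphism $\operatorname{pr}_1\colon Y'\times_Y Y'\to Y'$, hence a nonempty open and closed subscheme, and it is proper since $\deg\phi\ge2$. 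Thus $X\times_Y X$ is disconnected, contradicting $(5)$. The step needing the most care here — and, I expect, in the whole argument — is the assertion that the subbundle $F_1$ of Proposition \ref{prop1} is a sheaf of \emph{subalgebras} of $f_*{\mathcal O}_X$, so that the factorization $q$ exists as a morphism of the possibly singular curve $X$ and not merely of its smooth locus; one checks this by pulling back along $\beta$, using that $\widehat F$ is a subalgebra of $f'_*{\mathcal O}_{\widehat X}$ and that $F_1$ is saturated in $f_*{\mathcal O}_X$ (it is a subbundle by Proposition \ref{prop1}). Everything else is the EOSL hypothesis doing its work exactly as in the proofs of Proposition \ref{prop1} and \eqref{e3}: it forces $\widehat X$ to be smooth and $S_X=f^{-1}(S_Y)$.
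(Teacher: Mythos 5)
Your proposal is correct, but it closes the logical loop differently from the paper, so a comparison is worth recording. The parts you share with the paper are the equivalences $(1)\Leftrightarrow(2)\Leftrightarrow(3)\Leftrightarrow(4)$ (Corollary \ref{cor1}(2), the remark before the theorem, \cite{BP} for $f'$, and the Galois-category argument) and the base-change identification $H^0(X,\,f^*f_*{\mathcal O}_X)\,=\,H^0(X\times_YX,\,{\mathcal O}_{X\times_YX})$, which the paper uses to get the full equivalence $(5)\Leftrightarrow(6)$ and you use only for $(6)\Rightarrow(5)$. Where you diverge: for $(1)\Rightarrow(6)$ the paper stays on $X$ and uses Proposition \ref{prop2} to see that the Harder--Narasimhan filtration of $f^*f_*{\mathcal O}_X$ is the pullback of \eqref{e2}, so all higher graded pieces have negative degree and contribute no sections; you instead inject $H^0(X,\,f^*f_*{\mathcal O}_X)$ into $H^0(\widehat X,\,f'^*f'_*{\mathcal O}_{\widehat X})$ via ${\mathcal O}_X\hookrightarrow\beta'_*{\mathcal O}_{\widehat X}$, the projection formula and \eqref{e3}, and then quote the smooth-curve theorem of \cite{BP} for $f'$ --- this is a legitimate and arguably more economical reduction, at the cost of not exercising Proposition \ref{prop2} (which the paper needs anyway for Theorem \ref{thm2}). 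To complete the cycle the paper proves $(6)\Rightarrow(4)$ cohomologically (a factorization forces $\dim H^0(Y',\phi^*\phi_*{\mathcal O}_{Y'})\ge 2$ inside $H^0(X,f^*f_*{\mathcal O}_X)$), whereas you prove $(5)\Rightarrow(1)$ geometrically by extracting from the proof of Proposition \ref{prop1} a factorization $f=\phi\circ q$ with $\phi$ \'etale of degree $\ge 2$ and mapping $X\times_YX$ onto the disconnected $Y'\times_YY'$. Your flagged delicate point (that $F_1$ is a sheaf of subalgebras so that $q$ exists on all of $X$) is handled adequately by your saturation argument --- generically $F_1$ agrees with the subalgebra $\widehat F$, and the torsion-freeness of $(f_*{\mathcal O}_X)/F_1$ from Proposition \ref{prop1} propagates closure under multiplication across $S_Y$ --- and in fact it is implicit in the paper's identity $F_1=\phi_*{\mathcal O}_{Y'}$; but note you could have bypassed it entirely, since your already-established equivalence $(1)\Leftrightarrow(4)$ hands you the required factorization from the negation of $(1)$ without revisiting Proposition \ref{prop1}. (As in the paper, statement $(4)$ must of course be read with $Y'$ connected, which your construction and your Galois-theoretic argument both respect.)
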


\begin{proof}
It was shown that the first two statements are equivalent. The third and fourth statements are clearly equivalent.

To show that the fifth and sixth statements are equivalent, consider the fiber product
$$
\begin{matrix}
X\times_Y X & \stackrel{\varphi}{\longrightarrow} & X\\
\,\,\,\, \Big\downarrow\beta && \,\,\, \Big\downarrow f\\
X & \stackrel{f}{\longrightarrow} & Y
\end{matrix}
$$
We have $\beta_* {\mathcal O}_{X\times_Y X}\,=\, f^*f_* {\mathcal O}_X$. Since the above diagram is Cartesian, we have
$$
f^*f_* {\mathcal O}_X\,=\, \beta_*\varphi^*{\mathcal O}_X\,=\, \beta_* {\mathcal O}_{X\times_Y X},
$$
and hence
\begin{equation}\label{f3}
H^0(X,\, f^*f_* {\mathcal O}_X)\,=\, H^0(X,\, \beta_* {\mathcal O}_{X\times_Y X})\,=\,
H^0(X\times_Y X,\, {\mathcal O}_{X\times_Y X}).
\end{equation}
Since $X\times_Y X$ is connected if and only if $\dim H^0(X\times_Y X,\, {\mathcal O}_{X\times_Y X})
\,=\, 1$, from \eqref{f3} it follows that the fifth and sixth statements are equivalent.

To show that the second and third statements are equivalent, recall that the third statement
holds if and only if the homomorphism of \'etale fundamental groups $$f'_*\, :\, \pi_1^{\rm et}(\widehat{X})
\,\longrightarrow\,\pi_1^{\rm et}(\widehat{Y})$$ induced by $f'$ in \eqref{fp} is surjective. But this
homomorphism $f'_*$ is surjective if and only if $f'$ is genuinely ramified \cite[p.~12828, Proposition 2.6]{BP}.
So the second and third statements are equivalent.

We will now show that the first statement implies the sixth statement. From Proposition \ref{prop2}
we conclude that for any vector bundle $V$ on $Y$, the Harder--Narasimhan filtration of $f^*V$ is simply
the pullback, by $f$, of the Harder--Narasimhan filtration of $V$.

Assume that $f$ is genuinely ramified.
This implies that the Harder--Narasimhan filtration of $f_*{\mathcal O}_X$ in \eqref{e2} is of the form
$$
{\mathcal O}_Y \,=\, F_1 \, \subset\, F_2\, \subset\, \cdots \, \subset\, F_m\,=\, f_*{\mathcal O}_X,
$$
where $\text{degree}(F_j/F_{j-1})\, <\, 0$ for all $2\, \leq\, j\, \leq\, m$. Consequently, the
Harder--Narasimhan filtration of $f^*f_*{\mathcal O}_X$ is the following:
\begin{equation}\label{f1}
{\mathcal O}_X \,=\, f^*F_1 \, \subset\, f^*F_2\, \subset\, \cdots \, \subset\, f^*F_m\,=\, f^*f_*{\mathcal O}_X,
\end{equation}
Since $\text{degree}(f^*F_j/f^*F_{j-1})\, =\,\text{degree}(f)\cdot \text{degree}(F_j/F_{j-1})\, <\, 0$,
we have $$H^0(X,\, (f^*F_j)/(f^*F_{j-1}))\,=\, 0$$ for all $2\, \leq\, j\, \leq\, m$. In view of this, from
\eqref{f1} it follows that $$H^0(X,\, f^*f_*{\mathcal O}_X)\,=\, H^0(X,\, f^*F_1)\,=\, H^0(X,\, {\mathcal O}_X).$$
Hence the sixth statement holds.

Finally, we will show that the sixth statement implies the fourth statement. To prove this by contradiction,
let $\phi\, :\, Y'\, \longrightarrow\, Y$ be a nontrivial \'etale covering, and
$q\,:\, X\, \longrightarrow\, Y'$ a morphism, such that $\phi\circ q \,=\, f$. Then we have
$$
\phi_*{\mathcal O}_{Y'} \,\, \subset\,\, f_*{\mathcal O}_X,
$$
and hence
$$
q^*\phi^* \phi_*{\mathcal O}_{Y'}\,=\, f^*\phi_*{\mathcal O}_{Y'}\,\subset\, f^*f_*{\mathcal O}_X.
$$
This implies that
\begin{equation}\label{f2}
\dim H^0(X,\, f^*f_* {\mathcal O}_X)\,\geq \,\dim H^0(Y',\, \phi^* \phi_*{\mathcal O}_{Y'}).
\end{equation}

Since $\phi$ is \'etale, $Y'$ is a connected component of $Y'\times_Y Y'$ using the diagonal map.
So $Y'\times_Y Y'$ is not connected. Hence setting $f\,=\, \phi$ in \eqref{f3} we conclude that
$$\dim H^0(Y',\, \phi^* \phi_*{\mathcal O}_{Y'})\, \geq\, 2.$$ Therefore, \eqref{f2} contradicts the
sixth statement. So the sixth statement implies the fourth statement. This completes the proof.
\end{proof}

\begin{lemma}\label{lem3}
Let $f\, :\, X\, \longrightarrow\, Y$ be an EOSL map of degree $d$ between projective curves.
Assume that there is a finite group $\Gamma$ acting 
faithfully on $X$ such that $Y\,=\, X/\Gamma$. If $f$ is genuinely ramified, then
$$
f^*((f_*{\mathcal O}_X)/{\mathcal O}_Y)\,=\, (f^*f_*{\mathcal O}_X)/{\mathcal O}_X\,\, \hookrightarrow\,\,
\bigoplus_{i=1}^{d-1} {\mathcal L}_i,
$$
where each ${\mathcal L}_i$ is a line bundle on $X$ of negative degree.
\end{lemma}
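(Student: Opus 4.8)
The plan is to exploit the Galois structure: since $\Gamma$ acts faithfully on the irreducible curve $X$, the map $f$ is the quotient map $X\to X/\Gamma$, hence Galois with group $\Gamma$ and $d=|\Gamma|$. First I would justify the identification asserted in the statement. By Definition \ref{def1}, genuine ramification means $F_1={\mathcal O}_Y$, and by Proposition \ref{prop1} this $F_1$ is a subbundle, so $(f_*{\mathcal O}_X)/{\mathcal O}_Y$ is locally free; therefore $0\to{\mathcal O}_Y\to f_*{\mathcal O}_X\to(f_*{\mathcal O}_X)/{\mathcal O}_Y\to 0$ stays exact after applying $f^*$, and since $f^*{\mathcal O}_Y={\mathcal O}_X$ we get $f^*((f_*{\mathcal O}_X)/{\mathcal O}_Y)=(f^*f_*{\mathcal O}_X)/{\mathcal O}_X=:W$, a vector bundle of rank $d-1$ on $X$. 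As in the proof of Theorem \ref{thm1}, setting $Z:=X\times_Y X$ with first projection $\beta\colon Z\to X$, one has $\beta_*{\mathcal O}_Z=f^*f_*{\mathcal O}_X$, and the canonical subsheaf ${\mathcal O}_X=f^*{\mathcal O}_Y\subset\beta_*{\mathcal O}_Z$ is the one divided out to form $W$.

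The main construction uses graphs. For $\sigma\in\Gamma$ the graph $\Gamma_\sigma:=\{(x,\sigma x)\}\subset Z$ is a closed subscheme carried isomorphically onto $X$ by $\beta$, and restriction of functions gives a surjection of ${\mathcal O}_X$-algebras $\rho_\sigma\colon\beta_*{\mathcal O}_Z\to{\mathcal O}_X$ that is the identity on the canonical subsheaf ${\mathcal O}_X\subset\beta_*{\mathcal O}_Z$; concretely $\rho_\sigma(s)(x)=s(x,\sigma x)$. Over the locus $Y\setminus B_f$ where $f$ is \'etale, $Z$ is the disjoint union of the $\Gamma_\sigma$, so $\Psi:=(\rho_\sigma)_{\sigma\in\Gamma}\colon\beta_*{\mathcal O}_Z\to\bigoplus_\sigma{\mathcal O}_X$ is generically an isomorphism, hence injective because $\beta_*{\mathcal O}_Z$ is locally free (Lemma \ref{lem1}); consequently any section $s$ with all $\rho_\sigma(s)$ equal lies in the canonical ${\mathcal O}_X$. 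Next, $\Gamma_\sigma\cap\Gamma_\tau$ is carried by $\beta$ isomorphically onto the fixed-point subscheme $\mathrm{Fix}(\tau^{-1}\sigma)\subset X$, which by \eqref{e1} (and $S_X=f^{-1}(S_Y)$) is a finite subscheme of the regular locus $X\setminus S_X$, hence an effective Cartier divisor $D_{\sigma,\tau}$. Since $\rho_\sigma(s)$ and $\rho_\tau(s)$ agree along $\Gamma_\sigma\cap\Gamma_\tau$, the difference $\rho_\sigma-\rho_\tau$ has image in the ideal sheaf ${\mathcal O}_X(-D_{\sigma,\tau})\subset{\mathcal O}_X$, and as it kills the canonical ${\mathcal O}_X$ it descends to $\psi_{\sigma,\tau}\colon W\to{\mathcal O}_X(-D_{\sigma,\tau})$, with ${\mathcal O}_X(-D_{\sigma,\tau})$ a line bundle of degree $-\deg D_{\sigma,\tau}$, strictly negative exactly when $\mathrm{Fix}(\tau^{-1}\sigma)\neq\emptyset$.

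To finish, I would introduce $S:=\{\gamma\in\Gamma:\mathrm{Fix}(\gamma)\neq\emptyset\}$ (symmetric, containing $e$) and the Cayley graph $\mathrm{Cay}(\Gamma,S)$ on vertex set $\Gamma$ with an edge $\{\sigma,\tau\}$ precisely when $\tau^{-1}\sigma\in S$, i.e.\ when $\Gamma_\sigma\cap\Gamma_\tau\neq\emptyset$. Since $Z=\bigcup_\sigma\Gamma_\sigma$ topologically (the graphs account for all $d$ sheets over the dense \'etale locus) and each $\Gamma_\sigma$ is irreducible, the connected components of $Z$ coincide with those of $\mathrm{Cay}(\Gamma,S)$; as $f$ is genuinely ramified, $Z$ is connected by Theorem \ref{thm1}, so $\mathrm{Cay}(\Gamma,S)$ is connected, equivalently $\langle S\rangle=\Gamma$. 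I then choose a spanning tree $T$ of $\mathrm{Cay}(\Gamma,S)$: it has exactly $|\Gamma|-1=d-1$ edges, each $\{\sigma,\tau\}\in T$ satisfying $e\neq\tau^{-1}\sigma\in S$, so every ${\mathcal L}_{\{\sigma,\tau\}}:={\mathcal O}_X(-D_{\sigma,\tau})$ has negative degree. Finally $\Theta:=(\psi_{\sigma,\tau})_{\{\sigma,\tau\}\in T}\colon W\to\bigoplus_{i=1}^{d-1}{\mathcal L}_i$ is injective: a section of $\ker\Theta$ lifts to $s\in\beta_*{\mathcal O}_Z$ with $\rho_e(s)=0$ and $\rho_\sigma(s)=\rho_\tau(s)$ along every edge of $T$; connectedness of $T$ forces $\rho_\sigma(s)=\rho_e(s)=0$ for all $\sigma$, so $s=0$ by injectivity of $\Psi$. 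I expect the main obstacle to be the translation of ``$X\times_Y X$ connected'' into ``$\mathrm{Cay}(\Gamma,S)$ connected'' together with the careful verification that $\rho_\sigma-\rho_\tau$ indeed lands in the twisted-down ideal ${\mathcal O}_X(-D_{\sigma,\tau})$ (rather than merely in ${\mathcal O}_X$); once these are in place, the spanning-tree count producing exactly $d-1$ negative line bundles is routine. Note that a purely numerical argument via $\mu_{\max}(W)<0$ cannot work, since a stable bundle of negative slope admits no nonzero map to a line bundle of sufficiently negative degree — so the graph structure coming from the hypothesis $Y=X/\Gamma$ is essential.
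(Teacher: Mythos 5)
Your proof is correct and follows essentially the same route as the argument the paper invokes (it cites \cite[Proposition~3.5]{BP} and asserts the proof carries over): decompose $X\times_Y X$ into the graphs $\Gamma_\sigma$, use connectedness (genuine ramification via Theorem \ref{thm1}) to chain them together, and embed $(f^*f_*\mathcal{O}_X)/\mathcal{O}_X$ via differences of restriction maps into ideal sheaves of the pairwise intersections; your spanning tree of $\mathrm{Cay}(\Gamma,S)$ is just a repackaging of the inductive ordering of the graphs used in \cite{BP}. Moreover you correctly supply the one point specific to the singular setting — by the EOSL condition the fixed loci of nontrivial elements of $\Gamma$ lie over $B_f$, hence in the smooth locus $X\setminus S_X$, so the $\mathcal{O}_X(-D_{\sigma,\tau})$ are indeed line bundles of negative degree — which is exactly why ``the same proof works here.''
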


Lemma \ref{lem3} was proved in \cite{BP} under the assumption that $X$ is smooth (see \cite[p.~12837, 
Proposition 3.5]{BP}). The same proof works here.

\begin{theorem}\label{thm2}
Let $f\, :\, X\, \longrightarrow\, Y$ be an EOSL map of degree $d$ between
projective curves. Then the following two statements are equivalent:
\begin{enumerate}
\item $f$ is genuinely ramified;

\item $f^*E$ is stable for every stable vector sheaf $E$ on $Y$.
\end{enumerate}
\end{theorem}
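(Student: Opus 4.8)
The plan is to establish both implications of Theorem \ref{thm2} by reducing, as far as possible, to statements already available in the excerpt and then transporting information between $X$ and its normalization $\widehat{X}$ via the Cartesian square \eqref{fp}.

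For the implication $(2)\Rightarrow(1)$, I would argue by contraposition. Suppose $f$ is not genuinely ramified, so by Definition \ref{def1} the maximal semistable subsheaf $F_1\subset f_*{\mathcal O}_X$ has rank $\geq 2$. By the proof of Proposition \ref{prop1} we have $F_1=\phi_*{\mathcal O}_{Y'}$ for a nontrivial \'etale covering $\phi\colon Y'\to Y$ that factors $f$, i.e.\ $f=\phi\circ q$. Then for any stable $E$ on $Y'$ of the right slope, $\phi_*E$ need not be used; instead I take a stable bundle $E$ on $Y$ and note that $f^*E=q^*\phi^*E$, and $\phi^*E$ is a sum of the same bundle ``spread out'' — the cleanest route is to observe that $\phi^*\phi_*{\mathcal O}_{Y'}$ has ${\mathcal O}_{Y'}$ as a direct summand plus more, so $f^*(F_1/{\mathcal O}_Y)\neq 0$ with degree $0$, which forces $f^*f_*{\mathcal O}_X$ to have a semistable piece of rank $\geq 2$ and slope $0$; equivalently, by Proposition \ref{prop2} the pullback under $f$ of a stable bundle $E$ with $\phi^*E$ destabilized is itself destabilized. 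Concretely: $\phi^*E$ fails to be stable because $Y'\times_Y Y'$ is disconnected (as in the last paragraph of the proof of Theorem \ref{thm1}), so $\operatorname{End}(\phi^*E)\supsetneq k$, and then $f^*E=q^*\phi^*E$ inherits a nontrivial endomorphism, hence is not stable. So $(2)$ fails.

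For the implication $(1)\Rightarrow(2)$, assume $f$ is genuinely ramified and let $E$ be a stable sheaf on $Y$. By Proposition \ref{prop2}, $f^*E$ is semistable, so it suffices to rule out a subsheaf $V\subset f^*E$ of the same slope with $0<\operatorname{rank}(V)<\operatorname{rank}(E)$. The standard device is: such a $V$ gives a nonzero map $f^*E\to f^*E$ (via $V\hookrightarrow f^*E$ and a quotient projection, after passing to Jordan--H\"older) that does not land in $k\cdot\mathrm{id}$, equivalently $\dim H^0(X,\,{\mathcal E}nd(f^*E))\geq 2$, equivalently $\dim H^0(X,\,f^*E\otimes f^*E^\vee)\geq 2$. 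By the projection formula and semistability of $f^*E$, $f^*E\otimes f^*E^\vee=f^*({\mathcal E}nd(E))$, and $H^0(X,\,f^*({\mathcal E}nd\,E))=H^0(Y,\,{\mathcal E}nd(E)\otimes f_*{\mathcal O}_X)$. Now filter $f_*{\mathcal O}_X$ by its Harder--Narasimhan filtration ${\mathcal O}_Y=F_1\subset F_2\subset\cdots\subset F_m$ with $\mu(F_j/F_{j-1})<0$ for $j\geq 2$ (this uses genuine ramification plus Lemma \ref{lem2}). Since ${\mathcal E}nd(E)$ is semistable of slope $0$ (as $E$ is stable), ${\mathcal E}nd(E)\otimes(F_j/F_{j-1})$ has negative slope and is semistable, hence has no sections for $j\geq 2$; and ${\mathcal E}nd(E)\otimes F_1={\mathcal E}nd(E)$ has $H^0$ of dimension exactly $1$ because $E$ is stable (hence simple). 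Chasing the long exact sequences up the filtration gives $\dim H^0(Y,\,{\mathcal E}nd(E)\otimes f_*{\mathcal O}_X)=1$, so $f^*E$ is simple and semistable, therefore stable.

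The main obstacle I anticipate is the vanishing $H^0(Y,\,{\mathcal E}nd(E)\otimes(F_j/F_{j-1}))=0$ for $j\geq 2$: this needs ${\mathcal E}nd(E)$ to be semistable of slope $0$ and $F_j/F_{j-1}$ to be semistable of negative slope, so that their tensor product is semistable of negative degree and thus has no global sections. Over a smooth curve this is standard, but over a singular curve one must be careful that ``semistable of negative slope $\Rightarrow$ no sections'' still holds for the relevant (possibly torsionfree, not locally free) sheaves — here ${\mathcal E}nd(E)$ may only be torsionfree. This is handled by noting a nonzero section of a torsionfree semistable sheaf would give ${\mathcal O}_Y$ as a subsheaf, contradicting negativity of the slope, so the argument survives; still, verifying that tensoring the torsionfree ${\mathcal E}nd(E)$ with the \emph{locally free} $F_j/F_{j-1}$ (locally free by Proposition \ref{prop1}) preserves semistability and the slope additivity is the technical heart. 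An alternative, sidestepping tensor-semistability entirely, is to mimic Lemma \ref{lem3}'s argument and the endomorphism count used for Theorem \ref{thm1}(6): replace ${\mathcal E}nd(E)$ by working with $f^*E$ directly and bound $\dim H^0(X,\,f^*E\otimes f^*E^\vee)$ by pulling back along $\beta'$ to $\widehat{X}$ where the smooth-curve result of \cite{BP} applies, using that $\beta'$ is finite and $\beta'_*{\mathcal O}_{\widehat X}$ contains ${\mathcal O}_X$ as the maximal semistable subsheaf when $f$ is EOSL — I would keep this as the fallback if the torsionfree bookkeeping becomes delicate.
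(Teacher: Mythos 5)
There are genuine gaps in both directions, and they are not the bookkeeping issues you flagged. For $(1)\Rightarrow(2)$, your reduction ``$f^*E$ semistable and not stable $\Rightarrow$ $\dim H^0(X,\mathcal{E}nd(f^*E))\geq 2$'' is false: a strictly semistable bundle can be simple (take a nonsplit extension of two non-isomorphic stable bundles of the same slope). So proving $\dim H^0(Y,\mathcal{E}nd(E)\otimes f_*\mathcal{O}_X)=1$ only gives simplicity of $f^*E$; to conclude stability you would also need polystability of $f^*E$, which is exactly the delicate point for ramified covers (and fails to be automatic in positive characteristic). Second, your vanishing $H^0(Y,\mathcal{E}nd(E)\otimes(F_j/F_{j-1}))=0$ rests on ``semistable $\otimes$ semistable is semistable of the expected slope''. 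The paper makes no assumption on the characteristic, and in characteristic $p$ this tensor statement is false for semistable (non--strongly--semistable) sheaves; this is precisely why the proof of Theorem 5.3 in \cite{BP}, which the paper invokes verbatim for Theorem \ref{thm2}, proceeds instead through the Galois closure, the estimate of Lemma \ref{lem3} (an injection of $f^*((f_*\mathcal{O}_X)/\mathcal{O}_Y)$ into a sum of negative line bundles), and an induction, rather than through an endomorphism count. Your fallback of pulling back to $\widehat{X}$ and quoting the smooth-curve theorem of \cite{BP} also does not work as stated: the sheaf living upstairs is $f'^*\beta^*E$, and $\beta^*E$ of a stable sheaf $E$ on the singular curve $Y$ need not be stable (or even a sheaf to which the smooth-case theorem applies), so the smooth result cannot be applied to it directly.

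For $(2)\Rightarrow(1)$ the decisive claim, ``$\phi^*E$ fails to be stable because $Y'\times_Y Y'$ is disconnected, so $\operatorname{End}(\phi^*E)\supsetneq k$,'' is simply not true for an arbitrary stable $E$: for a nontrivial \'etale double cover $\phi$ with associated torsion line bundle $\eta$ one has $\operatorname{End}(\phi^*E)=\operatorname{End}(E)\oplus\operatorname{Hom}(E,E\otimes\eta)$, and for a general stable $E$ the second summand vanishes, so $\phi^*E$ is again stable. Disconnectedness of $Y'\times_Y Y'$ only tells you that $\phi^*\phi_*\mathcal{O}_{Y'}$ has at least two sections, i.e.\ it concerns $E=\phi_*\mathcal{O}_{Y'}$, which is not stable. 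What this direction actually requires is the \emph{existence} of some stable sheaf $E$ on $Y$ whose pullback is destabilized, the standard candidate being $E=\phi_*W$ for a suitable sheaf $W$ on $Y'$ (then $\phi^*\phi_*W$ contains $W$ as a proper subsheaf of the same slope, and this survives pullback by $q$); but then you must prove that $\phi_*W$ is stable on the possibly singular curve $Y$ --- compare Example 2 of the paper, which carries out exactly this verification in a special case. That existence argument is missing from your proposal, so neither implication is established as written.
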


Theorem \ref{thm2} is proved exactly as Theorem 5.3 of \cite[p.~12850]{BP} is proved.

\section{Some examples}\label{se-ex}

\subsection{Example 1}

Let $Y$ an irreducible nodal projective curve of arithmetic genus at least two. Let
$f\, :\, X\, \longrightarrow\, Y$ be the normalization. Then $f$ satisfies (4) and (5)
of Theorem \ref{thm1} but does not satisfy (1) and (3) of Theorem \ref{thm1}. Note that
$f$ is not an EOSL map.

\subsection{Example 2}

Consider the map
$$
\phi\, :\, {\mathbb C}{\mathbb P}^1\, \longrightarrow\, {\mathbb C}{\mathbb P}^1,\ \
\ z\, \longrightarrow\, z^2.
$$
Let $\psi_1\, :\, {\mathbb C}{\mathbb P}^1\, \longrightarrow\, X$ be the rational nodal
curve of arithmetic genus one obtained by identifying $1$ and $\sqrt{2}$.
Let $\psi_2\, :\, {\mathbb C}{\mathbb P}^1\, \longrightarrow\, Y$ be the rational nodal
curve of arithmetic genus one obtained by identifying $1$ and $2$. The map $\psi_2\circ\phi$
factors $\psi_1$. In other words, there is a unique map
$$f\, :\, X\, \longrightarrow\, Y$$
such that $\psi_2\circ\phi\,=\, f\circ\psi_1$. This map $f$ is clearly not EOSL.
Note that the homomorphism of \'etale fundamental groups
$$f_*\, :\, \pi_1^{\rm et}(X) \,\longrightarrow\,\pi_1^{\rm et}(Y)$$ induced by $f$ is surjective. So
statement (3) of Theorem \ref{thm1} holds.
We will show that there is a stable vector bundle on $Y$ whose pullback to $X$ is not stable.

Let $\beta\, :\, Z\ \longrightarrow\, Y$ be the unique \'etale covering of degree two (note that
$\pi_1(Y)\,=\, {\mathbb Z}$). Let $L$ be a holomorphic line bundle on $Z$ of degree one. Then
the direct image $\beta_*L$ is a vector bundle of rank two and degree one. To prove that
$\beta_*L$ is semistable, take any rank one subsheaf $F\, \subset\, \beta_*L$. Then we have a nonzero
homomorphism $\beta^*F \, \longrightarrow\, L$ because
$$
H^0(Y, \, {\rm Hom}(F,\,\beta_*L))\, \cong\, H^0(Z, \, {\rm Hom}(\beta^*F,\,L))
$$
(see \cite[p.~110]{Ha}). Since there is a nonzero homomorphism $\beta^*F \, \longrightarrow\, L$,
we conclude that
$$
2\cdot \text{degree}(F)\,=\, \text{degree}(\beta^*F)\, \leq\, \text{degree}(L)\,=\, 1.
$$
Therefore, it follows that $\beta_*L$ is semistable. Since $\text{degree}(\beta_*L)\,=\, 1$, this implies
that $\beta_*L$ is stable. So $f^*\beta_*L$ is a vector bundle on $X$ of rank two and degree two.

It can be shown that there is no stable vector bundle of rank two and degree two on $X$. Indeed, if $W$ is
a vector bundle on $X$ of rank two and degree two, then
$$
\dim H^0(X,\, W) \,\geq\, \dim H^0(X,\, W) - \dim H^1(X,\, W) \,=\, 2.
$$
Take two linearly independent sections $s$ and $t$ on $W$, and consider the evaluation homomorphism
$$
\eta\, :\, {\mathcal O}_X\oplus {\mathcal O}_X \, \longrightarrow\, W
$$
that sends any $(c_1,\, c_2)\, \in\, {\mathcal O}_x\oplus {\mathcal O}_x\, =\, {\mathbb C}^2$, $x\, \in\, X$,
to $c_1\cdot s(x)+ c_2\cdot t(x)\,\in\, W_x$. This $\eta$ is not an isomorphism
over $X$, because $\text{degree}(W)\,=\, 2\, >\, \text{degree}({\mathcal O}_X\oplus {\mathcal O}_X)$. Therefore,
there is $(a,\, b)\,\not=\, (0,\, 0)$ such that $as+bt$ vanishes at some point of $X$. The degree of the
rank one subsheaf of $W$ generated by $as+bt$ is at least one. Hence $W$ is not stable.
In particular, $f^*\beta_*L$ is not stable.

\subsection{Example 3}

Let
\begin{equation}\label{z1}
\gamma\,:\, {\mathbb C}{\mathbb P}^1\, \longrightarrow\, {\mathbb C}{\mathbb P}^1
\end{equation}
be the morphism defined by $z\, \longrightarrow\,
z^d$, with $d\, \geq\, 5$. Denote by $X$ the nodal curve of arithmetic genus $1$ obtained by identifying $1\, \in\, 
{\mathbb C}{\mathbb P}^1$ with $-1\, \in\, {\mathbb C}{\mathbb P}^1$. Let $Y$ be the nodal curve of arithmetic genus $1$
obtained by identifying $1\, \in\, {\mathbb C}{\mathbb P}^1$ with $\exp(\pi\sqrt{-1}/d)\, \in\, {\mathbb C}{\mathbb P}^1$.
The map $\gamma$ in \eqref{z1} produces a map
\begin{equation}\label{z2}
f\,:\, Y\, \longrightarrow\, X.
\end{equation}
Note that $\pi_1(Y,\, y_0)\,=\, {\mathbb Z}\,=\, \pi_1(X,\, f(y_0))$, and the induced homomorphism
\begin{equation}\label{e0}
f_*\,:\, \pi_1(Y,\, y_0)\,\longrightarrow\, \pi_1(X,\, f(y_0))
\end{equation}
is an isomorphism.

Consider the vector bundle ${\mathcal O}_{{\mathbb C}{\mathbb P}^1}\oplus {\mathcal O}_{{\mathbb C}{\mathbb P}^1}(1)$
on ${\mathbb C}{\mathbb P}^1$ of rank two and degree one. Take any isomorphism of fibers over $1$ and $-1$
\begin{equation}\label{z3}
I\, :=\, ({\mathcal O}_{{\mathbb C}{\mathbb P}^1}\oplus {\mathcal O}_{{\mathbb C}{\mathbb P}^1}(1))_1
\, \longrightarrow\,({\mathcal O}_{{\mathbb C}{\mathbb P}^1}\oplus {\mathcal O}_{{\mathbb C}{\mathbb P}^1}(1))_{-1}
\end{equation}
such that $I({\mathcal O}_{{\mathbb C}{\mathbb P}^1}(1)_1)\,=\,
({\mathcal O}_{{\mathbb C}{\mathbb P}^1})_{-1}$. Identifying the
fibers of ${\mathcal O}_{{\mathbb C}{\mathbb P}^1}\oplus {\mathcal O}_{{\mathbb C}{\mathbb P}^1}(1)$ over $-1$ and $-1$
using $I$ we obtain a vector bundle $E$ on $X$ of rank two and degree $1$. It is straightforward to check that $E$ is stable.

Now consider the vector bundle
$$
{\gamma}^*({\mathcal O}_{{\mathbb C}{\mathbb P}^1}\oplus{\mathcal O}_{{\mathbb C}{\mathbb P}^1}(1))\, \cong\,
{\mathcal O}_{{\mathbb C}{\mathbb P}^1}\oplus {\mathcal O}_{{\mathbb C}{\mathbb P}^1}(d)
$$
on ${\mathbb C}{\mathbb P}^1$, where $\gamma$ is the map in \eqref{z1}. Consider
the following isomorphism of its fibers over $1$ and $\exp(\pi\sqrt{-1}/d)$:
$$
{\gamma}^*({\mathcal O}_{{\mathbb C}{\mathbb P}^1}\oplus{\mathcal O}_{{\mathbb C}{\mathbb P}^1}(1))_1
\,=\, ({\mathcal O}_{{\mathbb C}{\mathbb P}^1}\oplus {\mathcal O}_{{\mathbb C}{\mathbb P}^1}(1))_1
\,\stackrel{I}{\longrightarrow}\,
({\mathcal O}_{{\mathbb C}{\mathbb P}^1}\oplus {\mathcal O}_{{\mathbb C}{\mathbb P}^1}(1))_{-1}
$$
$$
=\,\,
{\gamma}^*({\mathcal O}_{{\mathbb C}{\mathbb P}^1}\oplus {\mathcal O}_{{\mathbb C}{\mathbb P}^1}(1))_{\exp(\pi\sqrt{-1}/d)},
$$
where $I$ is the isomorphism in \eqref{z3}. Identifying the
fibers of ${\gamma}^*({\mathcal O}_{{\mathbb C}{\mathbb P}^1}\oplus
{\mathcal O}_{{\mathbb C}{\mathbb P}^1}(1))$ over $-1$ and $\exp(\pi\sqrt{-1}/d)$
using this isomorphism we obtain a vector bundle $V$ on $Y$ of rank two and degree $d$. Note that we have
\begin{equation}\label{z5}
f^*E\,=\, V,
\end{equation}
where $f$ is the map in \eqref{z2}.

We will construct a subsheaf of $f^*E$ of rank one and degree $d-2$. Consider the subsheaf
$$
{\mathcal O}_{{\mathbb C}{\mathbb P}^1}(d-2)\, \cong\, ({\gamma}^* ({\mathcal O}_{{\mathbb C}{\mathbb P}^1}(1)))
\otimes{\mathcal O}_{{\mathbb C}{\mathbb P}^1}(-1- \exp(\pi\sqrt{-1}/d))
$$
$$
\subset\,\,{\gamma}^* ({\mathcal O}_{{\mathbb C}{\mathbb P}^1}(1))\, \subset\,
{\gamma}^*({\mathcal O}_{{\mathbb C}{\mathbb P}^1}\oplus
{\mathcal O}_{{\mathbb C}{\mathbb P}^1}(1)).
$$
It produces a subsheaf of $V$ of rank $1$ and degree $d-2$. Now using the isomorphism in \eqref{z5} this subsheaf
produces a subsheaf of $f^*E$ of rank $1$ and degree $d-2$. Consequently, the vector bundle $f^*E$ is not stable
(recall that $d\, \geq\, 5$), although $E$ is stable and the homomorphism in \eqref{e0} is an isomorphism.

\section*{Acknowledgements}

We thank the referee for a very careful reading.

\end{document}